\documentclass[11pt]{article}

\usepackage[margin=1in]{geometry}
\usepackage{amsmath,amssymb,amsthm,mathtools}
\usepackage{bm}
\usepackage{enumitem}
\usepackage{hyperref}
\usepackage{mathrsfs}
\usepackage[utf8]{inputenc}

\newtheorem{proposition}{Proposition}

\newtheorem{theorem}{Theorem}

\newtheorem{definition}{Definition}
\newtheorem{remark}{Remark}

\hypersetup{
  colorlinks=true,
  linkcolor=blue,
  citecolor=blue,
  urlcolor=blue
}

\title{Observable Geometry of Singular Statistical Models}
\author{Sean Plummer\\ snplmmr@gmail.com}
\date{\today}

\begin{document}

\maketitle

\begin{abstract}
Singular statistical models arise whenever different parameter values induce the same distribution, leading to non-identifiability and a breakdown of classical asymptotic theory. While existing approaches analyze these phenomena in parameter space, the resulting descriptions depend heavily on parameterization and obscure the intrinsic statistical structure of the model. In this paper, we introduce an invariant framework based on \emph{observable charts}: collections of functionals of the data distribution that distinguish probability measures. These charts define local coordinate systems directly on the model space, independent of parameterization. We formalize \emph{observable completeness} as the ability of such charts to detect identifiable directions, and introduce \emph{observable order} to quantify higher-order distinguishability along analytic perturbations. Our main result establishes that, under mild regularity conditions, observable order provides a lower bound on the rate at which Kullback–Leibler divergence vanishes along analytic paths. This connects intrinsic geometric structure in model space to statistical distinguishability and recovers classical behavior in regular models while extending naturally to singular settings. We illustrate the framework in reduced-rank regression and Gaussian mixture models, where observable coordinates reveal both identifiable structure and singular degeneracies. These results suggest that observable charts provide a unified and parameterization-invariant language for studying singular models and offer a pathway toward intrinsic formulations of invariants such as learning coefficients.

\end{abstract}

\section{Introduction}

Classical statistical theory is built on local approximations of parametric models \cite{van2000asymptotic}. Under regularity conditions, a statistical model can be treated as a smooth manifold, with geometry determined by the score function and Fisher information. In this setting, asymptotic behavior is governed by first-order properties: local identifiability, quadratic expansion of the log-likelihood, and nondegenerate Fisher information. However, many modern statistical models are \emph{singular}: distinct parameter values may correspond to the same probability distribution, leading to non-identifiability and degenerate local geometry. This occurs in a wide range of settings, including mixture models, neural networks, and latent variable models. In such cases, classical asymptotic theory based on regular parameterizations breaks down, and key quantities such as Fisher information fail to capture the true structure of the model. Existing approaches to singular models, notably singular learning theory, analyze these phenomena by studying the geometry of parameter space, often after resolution of singularities \cite{watanabe2009algebraic}. While powerful, these methods remain tied to particular parameterizations, and the resulting descriptions can obscure the intrinsic statistical structure, which depends only on the set of distributions realized by the model. This suggests that the intrinsic object of interest is the model space $M$, not the parameter space $\Theta$. 

In this work, we propose a complementary perspective that operates directly on the \emph{model space} of probability distributions. Our central idea is to describe local structure using collections of \emph{observables}: functionals of the distribution that distinguish nearby models. Such collections define coordinate systems on model space that are invariant to reparameterization and reflect only statistically meaningful variation. We formalize this idea through the notion of an \emph{observable chart}, and introduce \emph{observable completeness} to characterize when such charts capture all identifiable directions. To study singular behavior, we further define \emph{observable order}, which measures the rate at which observables change along analytic perturbations. Observable order separates identifiable and non-identifiable directions in a way that is intrinsic to the model space and independent of parameterization. This provides a hierarchy of distinguishability that extends beyond first-order (Fisher) geometry. 

Our main theoretical result shows that observable order controls statistical distinguishability: under mild regularity conditions, it provides a lower bound on the rate at which Kullback–Leibler divergence vanishes along analytic paths. This establishes a direct connection between intrinsic geometric structure in model space and asymptotic statistical behavior. We illustrate the framework in reduced-rank regression and Gaussian mixture models, where observable coordinates recover identifiable structure and reveal how singular directions manifest through higher-order effects. These examples demonstrate that observable charts can separate intrinsic statistical features from parameterization artifacts. More broadly, our results suggest that singular statistical phenomena may be understood through intrinsic geometric structures defined on model space, rather than through particular parameterizations. In particular, observable order along analytic paths parallels valuation-based descriptions in singular learning theory, pointing toward a potential reformulation of learning coefficients in invariant terms.

\subsection{Related work}

This work connects to several strands of statistical theory, including classical information geometry, singular learning theory, and the study of identifiable structure in statistical models. In regular parametric models, local statistical behavior is characterized by the score function and Fisher information, leading to a Riemannian geometric structure on the model manifold \cite{amari2000methods, van2000asymptotic}. Our first-order observable framework recovers this classical geometry by expressing tangent directions through expectation functionals, providing an alternative coordinate-free perspective on Fisher geometry. Singular statistical models, including mixture models, latent-variable models, and neural networks, have been extensively studied in singular learning theory (SLT), where asymptotic behavior is governed by algebraic-geometric invariants such as the real log canonical threshold (RLCT) \cite{watanabe2009algebraic}. In this framework, the local behavior of the Kullback–Leibler divergence near singularities determines learning dynamics and generalization error \cite{watanabe2010asymptotic}. Unlike singular learning theory, which analyzes parameter space after resolution of singularities, our approach operates directly on model space and requires no reference to a particular parameterization. The resulting structure is therefore intrinsic and applies uniformly across equivalent representations of the same statistical model. Our observable charts extend this perspective by systematically organizing expectation functionals into local coordinate systems that capture higher-order structure. Our notion of observable order provides a complementary, functionally defined description of local model structure, suggesting a potential intrinsic interpretation of RLCT-type invariants in terms of observable expansions. There is also a long tradition of studying statistical models through moment and expectation-based representations. In exponential families, sufficient statistics provide global coordinate systems via expectation parameters \cite{brown1986fundamentals}. More generally, moment-based methods and cumulant expansions have been used to study identifiability and structure in mixture models and latent-variable models \cite{allman2009identifiability, anandkumar2014tensor}.  Finally, our approach is related to work on identifiability and equivalence in statistical models, where the goal is to characterize the model image independently of parameterization \cite{rothenberg1971identification}. By focusing on observable functionals, we explicitly construct finite-dimensional representations of the model image and analyze their local geometry, separating intrinsic statistical structure from parameterization artifacts. Overall, the observable framework developed here can be viewed as bridging classical differential-geometric approaches to statistics with algebraic and analytic perspectives on singular models, while providing a constructive and functionally grounded description of local model structure.

\section{Observable Representations of Statistical Models}

\subsection{Model as a subset of distributions}

Let $\mathcal P(\mathcal X)$ denote a space of probability distributions on a measurable space $\mathcal X$. A parametric statistical model is typically specified via a map
\[
\Phi : \Theta \to \mathcal P(\mathcal X), \qquad \theta \mapsto P_\theta,
\]
where $\Theta \subset \mathbb R^d$ is a parameter space. We define the \emph{model image}
\[
M = \Phi(\Theta) \subset \mathcal P(\mathcal X).
\]
The map $\Phi$ need not be injective; distinct parameter values may correspond to the same distribution. As a result, the geometry of $\Theta$ may contain redundancies that are not intrinsic to the statistical model. Throughout this paper, we treat $M$ as the primary object of interest.

\subsection{Observables and Observable Charts}

Let $\mathcal F$ be a class of measurable functions $f : \mathcal X \to \mathbb R$ such that expectations $\mathbb E_P[f]$ are well-defined for all $P \in M$. Each $f \in \mathcal F$ defines an \emph{observable functional}
\[
\psi_f : M \to \mathbb R, \qquad \psi_f(P) = \mathbb E_P[f].
\]
Finite collections of observables define maps into Euclidean space $\mathbb{R}$. Let $f_1, \dots, f_m \in \mathcal F$. Define
\[
\Psi : M \to \mathbb R^m, \qquad
\Psi(P) = \bigl(\mathbb E_P[f_1], \dots, \mathbb E_P[f_m]\bigr).
\]
We refer to $\Psi$ as an \emph{observable chart}. The image
\[
O = \Psi(M) \subset \mathbb R^m
\]
provides a finite-dimensional presentation of the model. Composing with the parameter map yields
\[
\Psi \circ \Phi : \Theta \to \mathbb R^m,
\qquad
\theta \mapsto \bigl(\mathbb E_\theta[f_1], \dots, \mathbb E_\theta[f_m]\bigr).
\]
Thus observable charts can be computed directly from the parameterization.

\subsection{Local observable structure}

Fix a reference distribution $P_0 \in M$, and let $u_0 = \Psi(P_0)$. We are primarily interested in the behavior of $O = \Psi(M)$ in a neighborhood of $u_0$. This local structure captures the distinguishability of nearby distributions. Different choices of observable charts yield different embeddings of the model into Euclidean space. However, when the observables are sufficiently rich, these embeddings are expected to represent the same local structure up to analytic reparameterization. In this sense, observable charts should be viewed as finite-dimensional \emph{presentations} of the local model structure.

\subsection{Discussion}

Observable charts provide a flexible way to represent statistical models using expectation functionals \cite{brown1986fundamentals}. Unlike parameter coordinates, they are directly tied to measurable quantities and naturally factor through the model image $M$. Observable charts should be viewed not merely as representations of a statistical model, but as finite-dimensional presentations of its local structure. When sufficiently rich, they are expected to determine the local model structure up to analytic equivalence.

\medskip

In the following sections, we show that observable charts recover classical statistical geometry at first order, and extend it to capture higher-order structure in singular models.

\section{Observable Tangent Geometry}

\subsection{Setup}

Let $\Phi : \Theta \to \mathcal P(\mathcal X)$ be a parametric statistical model, with $\theta \mapsto P_\theta$, and let $\theta_0 \in \Theta$ be a reference point. For a measurable function $f : \mathcal X \to \mathbb R$ such that $\mathbb E_\theta[f]$ is well-defined in a neighborhood of $\theta_0$, define the observable functional
\[
\psi_f(\theta) = \mathbb E_\theta[f].
\]
Given a finite collection $f_1,\dots,f_m$, we define the observable map
\[
\Psi(\theta) = \big(\psi_{f_1}(\theta), \dots, \psi_{f_m}(\theta)\big).
\]
We study the differential properties of these observable maps at $\theta_0$.

\subsection{Observable derivatives}

Assume that the model admits a density $p_\theta(x)$ with respect to a common dominating measure, and that differentiation under the integral is justified. Let
\[
s_\theta(x) = \nabla_\theta \log p_\theta(x)
\]
denote the score function. For any direction $v \in \mathbb R^d$, the directional derivative of $\psi_f$ at $\theta$ is given by
\[
D\psi_f(\theta)[v]
=
\frac{d}{dt}\Big|_{t=0} \mathbb E_{\theta + t v}[f]
=
\mathbb E_\theta\big[f \cdot v^\top s_\theta\big].
\]
In particular, at $\theta_0$,
\[
D\psi_f(\theta_0)[v]
=
\mathbb E_{\theta_0}\big[f \cdot v^\top s_{\theta_0}\big].
\]
Thus observable derivatives correspond to correlations between $f$ and the score function.

\subsection{Identifiable tangent directions}

A direction $v \in \mathbb R^d$ is said to be \emph{non-identifiable} at $\theta_0$ if
\[
P_{\theta_0 + t v} = P_{\theta_0} + o(t)
\]
in the sense of distributions. Under standard regularity conditions, this is equivalent to
\[
v^\top s_{\theta_0}(x) = 0 \quad \text{almost surely under } P_{\theta_0}.
\]
We define the \emph{identifiable tangent space} as the quotient
\[
T^{\mathrm{id}}_{\theta_0}
=
\mathbb R^d \big/ \{ v : v^\top s_{\theta_0} = 0 \ \text{a.s.} \}.
\]

We identify directions in $T^{\mathrm{id}}_{\theta_0}$ with equivalence classes of parameter perturbations that induce distinct first-order changes in the distribution.
 
\subsection{Observable separation of tangent directions}

We now show that observable derivatives recover the identifiable tangent structure.

\begin{theorem}[Observable Tangent Theorem] \label{thm:observable-tangent}
Let $\mathcal{F} \subset L^2(P_{\theta_0})$ be a class of bounded measurable functions whose linear span is dense in $L^2(P_{\theta_0})$. Then for any direction $v \in \mathbb R^d$,
\[
D\psi_f(\theta_0)[v] = 0 \ \text{for all } f \in \mathcal F
\quad \Longleftrightarrow \quad
v^\top s_{\theta_0} = 0 \ \text{in } L^2(P_{\theta_0}).
\]
In particular, observable derivatives separate identifiable directions, and the observable tangent space coincides with the identifiable tangent space.
\end{theorem}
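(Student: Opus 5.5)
The plan is to reduce the statement to the fact that an element of a Hilbert space orthogonal to a dense set must vanish. The identity for observable derivatives established above, $D\psi_f(\theta_0)[v] = \mathbb E_{\theta_0}[f\, v^\top s_{\theta_0}] = \langle f, v^\top s_{\theta_0}\rangle_{L^2(P_{\theta_0})}$, says that each observable derivative is an $L^2(P_{\theta_0})$ inner product with the directional score $g_v := v^\top s_{\theta_0}$. First I will make explicit the standing regularity assumption that the score is square-integrable, $s_{\theta_0}\in L^2(P_{\theta_0})$, i.e.\ that the Fisher information is finite. This is implicit in the statement's phrase ``$v^\top s_{\theta_0}=0$ in $L^2(P_{\theta_0})$''. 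Since each $f\in\mathcal F$ is bounded, the pairing $\langle f, g_v\rangle$ is well defined.

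The direction ($\Leftarrow$) is immediate. If $g_v=0$ in $L^2(P_{\theta_0})$, then $\mathbb E_{\theta_0}[f g_v]=0$ for every $f$, so every observable derivative vanishes.

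For ($\Rightarrow$), suppose $\langle f, g_v\rangle=0$ for all $f\in\mathcal F$. By linearity the same holds for every $h\in\operatorname{span}\mathcal F$. Because the linear functional $h\mapsto\langle h,g_v\rangle$ is continuous on $L^2(P_{\theta_0})$ (Cauchy--Schwarz), it vanishes on the closure $\overline{\operatorname{span}\mathcal F}=L^2(P_{\theta_0})$. Taking $h=g_v$ gives $\|g_v\|^2=0$, so $g_v=0$ $P_{\theta_0}$-a.s. The only point needing care is that $g_v$ actually lies in $L^2$. Without this, orthogonality to bounded functions would require a separate argument: density of bounded $f$ in $L^1$, or approximating $\operatorname{sign}(g_v)\mathbf 1_{|g_v|\le n}$ by elements of the span in a suitable sense. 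I expect this integrability bookkeeping to be the only real obstacle, and I will handle it by assuming finite Fisher information.

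Finally, for the ``in particular'' clause, I will define the observable tangent space as $\mathbb R^d$ modulo the kernel $\{v : D\psi_f(\theta_0)[v]=0\ \forall f\in\mathcal F\}$. The equivalence just proved shows that this kernel equals $\{v : v^\top s_{\theta_0}=0 \text{ a.s.}\}$. The two quotients of $\mathbb R^d$ are therefore by the same subspace and coincide with $T^{\mathrm{id}}_{\theta_0}$. In particular, two directions that are distinct in $T^{\mathrm{id}}_{\theta_0}$ are separated by some $f\in\mathcal F$, which gives the separation claim.
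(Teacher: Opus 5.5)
Your proposal is correct and follows essentially the same route as the paper's proof sketch: write $D\psi_f(\theta_0)[v]=\langle f, v^\top s_{\theta_0}\rangle_{L^2(P_{\theta_0})}$, conclude from orthogonality to a set with dense linear span that $v^\top s_{\theta_0}=0$, and note that the converse is immediate. Your explicit assumption of finite Fisher information fills in steps the paper leaves implicit: it guarantees $v^\top s_{\theta_0}\in L^2(P_{\theta_0})$, which makes the extension by linearity and continuity from $\mathcal F$ to the closure of its span valid.
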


\begin{proof}[Proof sketch]
From the derivative formula,
\[
D\psi_f(\theta_0)[v]
=
\mathbb E_{\theta_0}[f \cdot g],
\qquad
g = v^\top s_{\theta_0}.
\]
If $D\psi_f(\theta_0)[v]=0$ for all $f \in \mathcal F$, then $\mathbb E[f g]=0$ for all $f$ in a dense subset of $L^2(P_{\theta_0})$, which implies $g=0$ in $L^2(P_{\theta_0})$. The converse is immediate.
\end{proof}

\paragraph{Relation to Fisher geometry} The Fisher information matrix is given by
\[
I(\theta_0) = \mathbb E_{\theta_0}\big[s_{\theta_0} s_{\theta_0}^\top\big].
\]
Observable derivatives can be written as
\[
D\psi_f(\theta_0)[v]
=
\langle f, v^\top s_{\theta_0} \rangle_{L^2(P_{\theta_0})},
\]
so the geometry induced by observable derivatives coincides with the classical Fisher geometry \cite{van2000asymptotic, amari2000methods} on identifiable directions \cite{rothenberg1971identification}.

\subsection{Interpretation}

The preceding result shows that observable derivatives recover the classical tangent-space structure of a statistical model. In particular, first-order observable geometry is equivalent to the geometry induced by the score function and Fisher information. This provides an interpretation of classical statistical theory as a first-order theory of observable functionals. In regular models, this first-order structure is sufficient to describe local behavior. In singular models, however, there exist directions $v$ such that $v^\top s_{\theta_0}=0$, and hence
\[
D\psi_f(\theta_0)[v]=0 \quad \text{for all } f.
\]
Such directions are invisible to all first-order observable derivatives. Understanding these directions requires examining higher-order expansions of observable functionals, which we develop in subsequent sections.

\section{Regular Observable Coordinates}

\subsection{Regular models and identifiable parameterizations}

We consider a point $\theta_0 \in \Theta$ at which the model is regular, in the sense that the score function $s_{\theta_0}(x)$ spans a $d$-dimensional subspace of $L^2(P_{\theta_0})$. Equivalently, the Fisher information matrix
\[
I(\theta_0) = \mathbb E_{\theta_0}\big[s_{\theta_0} s_{\theta_0}^\top\big]
\]
is nonsingular. In this case, the parameterization $\theta \mapsto P_\theta$ is locally identifiable, and the model admits a smooth $d$-dimensional structure near $P_{\theta_0}$.

\subsection{Existence of observable coordinates}

We now show that, at such regular points, the local model structure can be recovered using finitely many observable functionals.

\begin{proposition}[Local observable coordinates at regular points]
Suppose the model is regular at $\theta_0$. Then there exist functions $f_1,\dots,f_d$ such that the observable map
\[
\Psi(\theta) = \big(\mathbb E_\theta[f_1], \dots, \mathbb E_\theta[f_d]\big)
\]
has full-rank Jacobian at $\theta_0$. In particular, $\Psi$ defines a local coordinate system for the model near $\theta_0$.
\end{proposition}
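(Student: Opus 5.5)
The plan is to compute the Jacobian of $\Psi$ at $\theta_0$ using the derivative formula from the observable-derivatives subsection, and then choose the $f_j$ so that this Jacobian is essentially the Fisher information. By that formula, the $(j,k)$ entry of the Jacobian is
\[
\frac{\partial}{\partial \theta_k}\mathbb E_\theta[f_j]\Big|_{\theta=\theta_0} = \mathbb E_{\theta_0}\big[f_j\, s_{\theta_0,k}\big] = \langle f_j, s_{\theta_0,k}\rangle_{L^2(P_{\theta_0})}.
\]
So the Jacobian is the $d\times d$ Gram-type matrix $G = \big(\langle f_j, s_k\rangle\big)_{j,k}$. The problem reduces to choosing $f_1,\dots,f_d$ so that $G$ is nonsingular.

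The natural first choice is $f_j = s_{\theta_0,j}$, the score components themselves. Then $G = I(\theta_0)$, which is nonsingular by the regularity assumption. One caveat is that the scores may be unbounded, while Theorem~\ref{thm:observable-tangent} works with bounded observables. Since the proposition asks only for existence of some functions, I would also give a bounded version. Take a class $\mathcal F$ of bounded functions whose span is dense in $L^2(P_{\theta_0})$, for instance truncations $f_j^{(n)} = s_{\theta_0,j}\mathbf 1\{|s_{\theta_0}|\le n\}$. As $n\to\infty$ these converge to $s_{\theta_0,j}$ in $L^2$, so $G^{(n)} \to I(\theta_0)$ entrywise. Because the determinant is continuous, $\det G^{(n)} \neq 0$ for large $n$.

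An alternative route is via Theorem~\ref{thm:observable-tangent}. The linear map $v \mapsto \big(D\psi_f(\theta_0)[v]\big)_{f\in\mathcal F}$ is injective on $\mathbb R^d$, because $v^\top s_{\theta_0}=0$ forces $v=0$ when $I(\theta_0)$ is nonsingular. An injective map from a $d$-dimensional space admits $d$ coordinates $f_1,\dots,f_d$ whose derivative rows are linearly independent. To see this, pick rows greedily: each time the current rows fail to span, the common kernel is nonzero, and injectivity supplies some $f$ that does not vanish on it.

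Given a nonsingular Jacobian, the inverse function theorem shows that $\Psi\circ\Phi$ is a local diffeomorphism near $\theta_0$. This needs $\Psi\circ\Phi$ to be $C^1$ near $\theta_0$, which follows from the standing assumption that differentiation under the integral is justified, applied in a neighborhood and with continuity of $\theta\mapsto\mathbb E_\theta[f_j s_\theta]$. Since $\Psi$ factors through $M$, $\Psi\circ\Phi$ is locally injective and $\Phi$ is locally injective. Hence $\Psi$ gives a local coordinate system on the image of a neighborhood of $\theta_0$.

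The main obstacle is the regularity needed for the inverse function theorem: continuous differentiability of $\theta \mapsto \mathbb E_\theta[f_j]$ near $\theta_0$, not just differentiability at $\theta_0$. With bounded $f_j$ this follows from standard dominated-convergence conditions on $\partial_\theta p_\theta$. I would state this as part of the regularity hypotheses rather than prove it. A second subtlety is the phrase ``coordinate system for the model'': local injectivity on $M$ only covers distributions $\Phi(\theta)$ with $\theta$ near $\theta_0$, not all nearby points of $M$. I would state the conclusion in that parametric-neighborhood sense.
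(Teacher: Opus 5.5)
Your proof is correct, and your main argument takes a different route from the paper's. The paper argues purely by existence. It uses the Observable Tangent Theorem to say that observable derivatives from a dense class of bounded functions detect every direction with $v^\top s_{\theta_0}\neq 0$. Regularity makes this every $v\in\mathbb R^d\setminus\{0\}$. It then asserts, without justification, that $d$ functions with linearly independent differentials can be chosen, and invokes the inverse function theorem.

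Your alternative route is exactly this argument. Your greedy selection (a nonzero common kernel plus injectivity yields a new independent row) supplies the linear-algebra step the paper leaves implicit.

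Your primary route bypasses the tangent theorem altogether. Taking $f_j = s_{\theta_0,j}$ makes the Jacobian literally $I(\theta_0)$. The truncations $s_{\theta_0,j}\mathbf 1\{|s_{\theta_0}|\le n\}$ give bounded observables whose Jacobian converges to $I(\theta_0)$, so it is nonsingular for large $n$. One wording issue: the truncations do not themselves form a class with dense span, as your phrasing suggests. They do not need to, since only their $L^2$ convergence to the scores is used.

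What each route buys:
\begin{itemize}
\item The constructive route gives an explicit chart whose Jacobian is tied directly to Fisher geometry, and it needs no density hypothesis.
\item The paper's route gives the more flexible statement that the $d$ coordinates can be drawn from \emph{any} prescribed class of bounded observables with dense span, such as moments, indicators, or test functions. This matches the paper's later emphasis on building charts from natural observables.
\end{itemize}

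Two of your remarks are genuine points the paper's sketch passes over. The inverse function theorem needs $C^1$ regularity of $\theta\mapsto\mathbb E_\theta[f_j]$ on a neighborhood, not just differentiability at $\theta_0$. And the conclusion gives coordinates only on $\Phi(U)$ for a parameter neighborhood $U$, not on a neighborhood of $P_{\theta_0}$ in $M$.
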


\begin{proof}[Proof sketch]
By Theorem~\ref{thm:observable-tangent}, observable derivatives separate identifiable directions. Since the model is regular, the identifiable tangent space has full dimension $d$. Therefore, there exist functions $f_1,\dots,f_d$ such that the differentials $D\psi_{f_i}(\theta_0)$ are linearly independent. This implies that the Jacobian of $\Psi$ has full rank at $\theta_0$, and the result follows from the inverse function theorem.
\end{proof}

\subsection{Relation to sufficient statistics}

In classical exponential families, observable coordinates arise naturally from sufficient statistics \cite{brown1986fundamentals}. If
\[
p_\theta(x) = \exp\big(\theta^\top T(x) - A(\theta)\big),
\]
then
\[\Psi(\theta) = (
\mathbb E_\theta[T_1(x)],\, \mathbb E_\theta[T_2(x)], \ldots, \mathbb E_\theta[T_d(x)])
\]
provides a global coordinate system for the model. From the observable perspective, sufficient statistics correspond to a distinguished choice of observable chart that is globally valid. The preceding proposition shows that, even outside exponential families, local observable coordinates always exist at regular points.

\subsection{Interpretation}

The existence of local observable coordinates at regular points explains why classical statistical theory can be formulated entirely in terms of first-order quantities such as the score function and Fisher information. In these settings, observable functionals provide a complete first-order description of the model. In particular, the parameterization $\theta$ itself can be viewed as one possible coordinate system, while observable charts provide alternative coordinate systems derived from expectation functionals.

\subsection{Limitations in singular models}

The situation changes fundamentally at singular points. When the Fisher information is degenerate, the identifiable tangent space has reduced dimension, and observable derivatives fail to distinguish certain directions. In such cases, no observable chart can provide a full first-order coordinate system. This motivates the study of higher-order observable expansions, which allow hidden directions to be detected at higher orders. We develop this perspective in the next section.

\section{Observable Order and KL Geometry}

\subsection{Observable order along arcs}

Let $\theta_0 \in \Theta$ be a reference point, and let $\gamma : (-\epsilon,\epsilon) \to \Theta$ be an analytic curve such that $\gamma(0)=\theta_0$. Given an observable chart $\Psi$, we define the \emph{observable order} of $\gamma$ as
\[
o_\Psi(\gamma)
=
\operatorname{ord}_{t=0}\big(\Psi(\gamma(t))-\Psi(\theta_0)\big),
\]
where $\operatorname{ord}_{t=0}$ denotes the smallest integer $k$ such that the expansion has a nonzero $t^k$ term,
\[
\Psi(\gamma(t))-\Psi(\theta_0)=O(t^k)
\quad\text{but}\quad
\Psi(\gamma(t))-\Psi(\theta_0)\neq O(t^{k+1}).
\]
If no such $k$ exists, define $o_\Psi(\gamma)=\infty$. Equivalently, $o_\Psi(\gamma)=k$ if
\[
\Psi(\gamma(t))-\Psi(\theta_0)=t^k v + o(t^k)
\]
for some nonzero vector $v\in\mathbb R^m$.  If $o_\Psi(\gamma)=1$, the curve is detectable at first order and corresponds to a direction visible in the observable tangent space. If $o_\Psi(\gamma) > 1$, then the curve is invisible to all first-order observable derivatives and only becomes distinguishable at higher order. Thus observable order provides a natural notion of \emph{higher-order identifiability}.

\subsection{KL divergence along arcs}

Let $K(\theta) = KL(P_{\theta_0} \| P_\theta)$ denote the Kullback--Leibler divergence from the reference distribution. We define the \emph{KL order} of $\gamma$ as $o_K(\gamma)= \operatorname{ord}_{t=0} K(\gamma(t))$. In regular models, $K(\theta)$ admits a quadratic expansion near $\theta_0$ \cite{van2000asymptotic},
\[
K(\theta_0 + v) \approx \tfrac{1}{2} v^\top I(\theta_0) v,
\]
so for a first-order identifiable direction, $o_K(\gamma)=2$.

\subsection{Principle of observable completeness}
We now relate observable order to the behavior of the KL divergence. 

\begin{definition}[First-order complete observable chart] \label{def:first-order-complete}
An observable chart $\Psi$ is \textit{first-order complete} at $\theta_0$ if for any analytic curve $\gamma$ with $\gamma(0)=\theta_0$,
\[
\Psi(\gamma(t)) - \Psi(\theta_0) = o(t)
\quad\Longrightarrow\quad
p_{\gamma(t)} - p_{\theta_0} = o(t)
\]
in $L^2(P_{\theta_0})$.
\end{definition}

Equivalently, observable chart $\Psi$ is first-order complete at $\theta_0$ if 
\[
\ker D\Psi(\theta_0)
=
\{v \in \mathbb{R}^d : v^\top s_{\theta_0} = 0 \text{ in } L^2(P_{\theta_0})\},
\]
 i.e., $\Psi$ induces an injective map on the identifiable tangent space $T^{\mathrm{id}}_{\theta_0}$. These formulations are equivalent under differentiability of the model, as the first-order behavior along curves is determined by the score function. Thus observable order is an intrinsic property of the model and does not depend on the particular choice of complete observable chart.

\begin{remark}
By Theorem~\ref{thm:observable-tangent}, first-order completeness holds whenever the observable derivatives separate identifiable score directions. In practice, completeness is assessed by examining observable orders of relevant perturbations and verifying that observable derivatives separate identifiable directions.
\end{remark}

In singular models, however, certain directions are invisible to first-order derivatives. This motivates a higher-order notion of completeness.

\begin{definition}[$k$-th order observable completeness] 
An observable chart $\Psi$ is $k$-th order complete at $\theta_0$ if for any analytic curve $\gamma$ with $\gamma(0)=\theta_0$,
\[
\Psi(\gamma(t)) - \Psi(\theta_0) = o(t^k)
\quad\Longrightarrow\quad
p_{\gamma(t)} - p_{\theta_0} = o(t^k)
\]
in $L^2(P_{\theta_0})$.
\end{definition}
First-order completeness corresponds to the case $k=1$. Higher-order completeness ensures that observable charts detect structure up to a prescribed order. 

\begin{remark}
An equivalent formulation of $k$-th order completeness is that for any analytic curves $\gamma_1,\gamma_2$,
\[
\Psi(\gamma_1(t)) - \Psi(\gamma_2(t)) = o(t^k)
\quad\Longrightarrow\quad
p_{\gamma_1(t)} - p_{\gamma_2(t)} = o(t^k).
\]
This expresses that observable charts distinguish distributions up to order $k$.
\end{remark}

\begin{remark}
This notion can be interpreted in terms of $k$-jets of curves, where observable completeness corresponds to injectivity of observable charts on $k$-th order expansions.
\end{remark}

\subsection{Relation between observable order and KL order}
We assume the model admits a second-order expansion of KL divergence along $L^2$-small perturbations, which holds under standard smoothness and domination conditions (e.g., differentiable densities with common support). The following result shows that observable order provides a lower bound on the rate of KL divergence decay, linking observable geometry directly to statistical distinguishability.

\begin{theorem}[Observable order controls KL order] \label{thm:observable-kl}
Let $\Psi$ be a first-order complete observable chart at $\theta_0$. Then for any analytic curve $\gamma$,
\[
o_K(\gamma) \ge 2\,o_\Psi(\gamma).
\]
\end{theorem}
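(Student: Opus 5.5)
The plan is to route everything through the leading term of the density perturbation along $\gamma$. Write the analytic expansion
\[
p_{\gamma(t)} - p_{\theta_0} = t^r h_r + o(t^r) \quad \text{in } L^2(P_{\theta_0}), \qquad h_r \neq 0,
\]
where $r$ is the order of the density perturbation; if no such $r$ exists then $P_{\gamma(t)}\equiv P_{\theta_0}$ and both orders are $\infty$. The argument then splits into two claims:
\[
\text{(A)}\quad o_K(\gamma) = 2r, \qquad\qquad \text{(B)}\quad o_\Psi(\gamma) \le r .
\]
Together these give $o_K(\gamma) = 2r \ge 2\,o_\Psi(\gamma)$. Note the direction of (B). Bounded observables trivially give $o_\Psi \ge r$, since $\Psi(\gamma(t))-\Psi(\theta_0) = \int f_i\,(p_{\gamma(t)}-p_{\theta_0})\,d\mu$. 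What is needed is the converse: $\Psi$ must see the leading density term at exactly order $r$.

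For (A), I would use the assumed second-order expansion of KL along $L^2$-small perturbations,
\[
K(\gamma(t)) = \tfrac12 \Bigl\| \tfrac{p_{\gamma(t)}-p_{\theta_0}}{p_{\theta_0}} \Bigr\|^2_{L^2(P_{\theta_0})} + O\bigl(\|p_{\gamma(t)}-p_{\theta_0}\|^3\bigr).
\]
Substituting the expansion gives a leading term $\tfrac12 t^{2r}\|h_r/p_{\theta_0}\|^2 \neq 0$, and the remainder is $o(t^{2r})$. Hence $o_K(\gamma)=2r$; for the inequality, $o_K(\gamma)\ge 2r$ would already suffice.

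For (B), I would reduce to a first-order statement by rescaling time. Set $\tilde\gamma(s) = \gamma(s^{1/r})$ for $s\ge 0$. Along $\tilde\gamma$ the density satisfies $p_{\tilde\gamma(s)} - p_{\theta_0} = s\,h_r + o(s)$, so $\tilde\gamma$ is a one-sided $C^1$ arc whose first-order density derivative is $h_r\neq 0$. If the first-order completeness of Definition~\ref{def:first-order-complete} applies to $\tilde\gamma$, its contrapositive gives
\[
\Psi(\tilde\gamma(s)) - \Psi(\theta_0) \neq o(s).
\]
Translated back to $t = s^{1/r}$, this says $\Psi(\gamma(t))-\Psi(\theta_0)$ is not $o(t^r)$. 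Since that difference is analytic in $t$, its order is at most $r$, which is (B). Equivalently, in the kernel formulation following Definition~\ref{def:first-order-complete}: $\int f\,h_r\,d\mu\neq 0$ for some component $f$ of $\Psi$, because $h_r/p_{\theta_0}$ is a nonzero first-order tangent vector of the model at $P_{\theta_0}$, and by Theorem~\ref{thm:observable-tangent} such vectors are separated by the observables of a complete chart.

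The main obstacle is exactly this last step. The curve $\tilde\gamma$ is not analytic, and $h_r/p_{\theta_0}$ need not lie in the span of the parameter scores $v^\top s_{\theta_0}$; in singular models it typically lies only in the tangent cone of $M$ at $P_{\theta_0}$. I would therefore first try to read first-order completeness intrinsically, as injectivity of $h\mapsto \int f\,h\,d\mu$ on the full tangent cone of $M$ (the limits of $(p-p_{\theta_0})/\|p-p_{\theta_0}\|$). That is the reading under which observable order is intrinsic, as the paper asserts. Under this reading, $h_r/\|h_r\|$ belongs to the cone directly, and (B) follows by pairing $h_r$ with the components of $\Psi$.

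If the definition must be kept literally in terms of analytic curves and parameter scores, I would instead argue by truncation. Take the $r$-jet of $\gamma$ and exhibit an analytic curve whose first-order density derivative is a nonzero multiple of $h_r$. This is the delicate point, and the proof may have to state the cone interpretation of completeness explicitly rather than derive it.
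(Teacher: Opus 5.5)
Your reduction is correct and sharper than the paper's. Bounded observables give $o_\Psi(\gamma)\ge r$ automatically, and (A) gives $o_K(\gamma)=2r$. So the theorem is equivalent to your claim (B), i.e.\ $o_\Psi(\gamma)=r$.

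The genuine gap is that neither of your routes obtains (B) from first-order completeness as defined in Definition~\ref{def:first-order-complete}. In fact the statement is false under that hypothesis. The definition only constrains first-order density derivatives along analytic curves in $\Theta$, and these are always of the form $(\gamma'(0)^\top s_{\theta_0})\,p_{\theta_0}$. Your truncation route therefore cannot produce an analytic curve with first-order density derivative $h_r$ when $h_r/p_{\theta_0}$ lies outside the score span. The tangent-cone reading is a strictly stronger assumption, not a reading of the stated one.

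Concretely, take $P_\theta=\mathcal N(\theta^2,1)$, $\theta_0=0$ and $\Psi(P)=\mathbb E_P[X^2]$. Then $s_0=0$, and $p_{\gamma(t)}-p_0=O(\gamma(t)^2)=o(t)$ for every analytic $\gamma$. So $\Psi$ is first-order complete in both formulations ($\ker D\Psi(0)=\mathbb R$). Yet for $\gamma(t)=t$ we get $\Psi(\gamma(t))-\Psi(0)=t^4$ and $K(\gamma(t))=t^4/2$, so $o_K(\gamma)=4<8=2\,o_\Psi(\gamma)$. In your notation $r=2$, $h_2/p_0=x$, and $\mathbb E_{P_0}[X^2\cdot X]=0$. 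The chart annihilates the leading density term, which is exactly the failure of (B). Any even observable, e.g.\ the bounded $\cos X$, fails in the same way.

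The paper's proof has the same hole. It asserts ``by first-order completeness, $p_{\gamma(t)}-p_{\theta_0}=O(t^k)$'' with $k=o_\Psi(\gamma)$, which is precisely your (B). Definition~\ref{def:first-order-complete} delivers this only for $k\le 2$. So your closing remark is the right diagnosis: the hypothesis must be strengthened, not derived.

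Under a strengthened hypothesis (your cone injectivity, or $(k-1)$-th order completeness) (B) does hold. In the latter case, $\Psi(\gamma(t))-\Psi(\theta_0)=O(t^k)=o(t^{k-1})$ gives $p_{\gamma(t)}-p_{\theta_0}=o(t^{k-1})$, hence $r\ge k$ by analyticity. The half $o_K(\gamma)\ge 2r$ of your (A) then finishes the proof; this is the paper's argument made valid.

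Incidentally, your $\chi^2$ form $\tfrac12\|(p-p_0)/p_0\|^2_{L^2(P_{\theta_0})}$ is the correct local expansion. The paper's $\tfrac12\|p-p_0\|^2$ omits the $1/p_{\theta_0}$ weighting.
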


Thus observable order provides a lower bound on the rate at which statistical distinguishability emerges along analytic paths. This result shows that observable order provides an intrinsic notion of statistical distinguishability. In regular models, this reduces to the classical quadratic behavior governed by Fisher information, while in singular models it captures higher-order degeneracies invisible to first-order analysis.
\begin{proof}
Let $k = o_\Psi(\gamma)$, so that
\[
\Psi(\gamma(t)) - \Psi(\theta_0) = O(t^k).
\]
By first-order completeness,
\[
p_{\gamma(t)} - p_{\theta_0} = O(t^k)
\]
in $L^2(P_{\theta_0})$.

Using the standard second-order expansion of KL divergence \cite{watanabe2009algebraic},
\[
K(\gamma(t)) = \frac{1}{2}\|p_{\gamma(t)} - p_{\theta_0}\|^2 + o(\|p_{\gamma(t)} - p_{\theta_0}\|^2),
\]
we obtain
\[
K(\gamma(t)) = O(t^{2k}),
\]
which implies $o_K(\gamma) \ge 2o_\Psi(\gamma)$.
\end{proof}

\begin{remark}[Generic equality]
If the leading quadratic form in the KL expansion is nondegenerate along the first non-vanishing observable displacement, then
\[
o_K(\gamma) = 2\,o_\Psi(\gamma).
\]
This condition holds in many models of interest, including Gaussian regression, mixture models, and neural networks, as illustrated in Section \ref{sec:examples}.
\end{remark}

\subsection{Interpretation}

The preceding result shows that observable order provides a lower bound on the rate at which statistical distinguishability emerges along analytic paths. In particular, directions that are invisible at first order ($o_\Psi(\gamma) > 1$) correspond to higher-order degeneracies of the KL divergence. This provides a geometric explanation for the failure of classical asymptotic theory in singular models: Fisher information captures only first-order observable structure, while singular behavior arises from directions that become observable only at higher order \cite{watanabe2009algebraic}.

\subsection{Towards higher-order observable geometry}

Observable order suggests a hierarchy of geometric structure:
\begin{itemize}
    \item First-order observables recover classical tangent-space geometry.
    \item Higher-order observable expansions detect directions invisible at first order.
    \item This suggests that the local geometry of a statistical model may be recoverable from observable jet data along analytic arcs.
\end{itemize}
This perspective extends classical differential geometry of statistical models to a higher-order setting, in which distinguishability is determined not only by first derivatives but by the full hierarchy of observable expansions.

\section{Constructing Observable Charts}

\subsection{The construction problem}

The preceding sections establish that observable charts can recover both first-order and higher-order structure of a statistical model. A natural question is how to construct such charts in practice. In general, there is no canonical choice of observables. Instead, observable charts must be chosen so as to capture the local structure of the model near a reference point $P_{\theta_0}$. The goal is to construct a finite collection of observables that provides a sufficiently rich local presentation of the model.

\subsection{Iterative construction procedure}

We now describe a practical procedure for constructing observable charts.

\begin{enumerate}[leftmargin=1.5em]
    \item \textbf{Initialize with natural observables.}  
    Choose a small collection of observables based on the structure of the model. Typical choices include moments, cross-moments, or response functionals \cite{anandkumar2014tensor}.
    
    \item \textbf{Compute first-order structure.}  
    Evaluate the Jacobian $D\Psi(\theta_0)$ and identify directions $v$ such that
    \[
    D\Psi(\theta_0)[v] = 0.
    \]
    These directions are invisible to first-order observables.
    
    \item \textbf{Probe hidden directions.}  
    For each such direction, construct analytic curves $\gamma(t)$ with initial direction $v$, and compute observable expansions
    \[
    \Psi(\gamma(t)) - \Psi(\theta_0).
    \]
    
    \item \textbf{Add higher-order observables.}  
    Introduce additional observables whose expectations become nonzero at the lowest possible order along these curves.
    
    \item \textbf{Iterate.}  
    Repeat the process until all directions of interest become observable at some finite order.
\end{enumerate}

\subsection{Interpretation}

This procedure constructs observable charts by progressively revealing directions that are hidden from lower-order observables. At each stage, new observables are added specifically to detect the lowest-order nonvanishing effects along previously invisible directions. In this sense, observable charts are built to minimize the order at which different directions become distinguishable.

\subsection{Choice of observables}

In practice, effective observable choices depend on the model class.
\begin{itemize}
    \item In mixture models, low-order moments and cumulants naturally reveal component structure.
    \item In regression and matrix factorization models, cross-moments provide intrinsic coordinates.
    \item In neural networks, response functionals or evaluations at input points capture functional variation.
\end{itemize}
These choices are not arbitrary: they are guided by the requirement that observable expansions detect hidden structure at the lowest possible order.

\subsection{Discussion}

The construction procedure does not require identifying a minimal or canonical set of observables. Rather, it provides a principled method for building finite observable charts that capture the local structure of a model to a desired level of precision. In particular, low-order observable completeness is often sufficient for understanding singular behavior, as higher-order effects become progressively smaller. This perspective shifts the focus from parameter-based descriptions to observable-based constructions, providing a practical framework for analyzing singular statistical models.

\section{Examples} \label{sec:examples}

We illustrate the observable framework in several canonical models. In each case, we construct observable charts that recover the local structure of the model and demonstrate how higher-order observables reveal directions that are invisible at first order. In these examples, observable order aligns with known singular behavior characterized by learning coefficients, suggesting a deeper connection between observable geometry and asymptotic learning rates.

\subsection{Gaussian mixture model}

Consider a two-component Gaussian mixture with fixed variance:
\[
P = \Big(\tfrac{1}{2} + \alpha\Big)\mathcal N(\mu + \delta, \sigma^2)
+
\Big(\tfrac{1}{2} - \alpha\Big)\mathcal N(\mu - \delta, \sigma^2).
\]
We study the model near the singular point $(\mu,\delta,\alpha) = (\mu_0, 0, 0)$.

\paragraph{Step 1: mean.}
The first observable is the mean:
\[
m_1 = \mathbb E[X] = \mu + 2\alpha \delta.
\]
At the singular point, the derivative of $m_1$ detects only the $\mu$ direction; the parameters $\delta$ and $\alpha$ are invisible at first order.

\paragraph{Step 2: variance.}
The variance is
\[
\kappa_2 = \operatorname{Var}(X) = \sigma^2 + \delta^2 + O(\alpha^2 \delta^2).
\]
This reveals the separation parameter $\delta$ at second order.

\paragraph{Step 3: skewness.}
The third cumulant satisfies
\[
\kappa_3 \approx 4 \alpha \delta^3
\]
near the singular point. This observable detects asymmetry through the interaction between $\alpha$ and $\delta$ \cite{rothenberg1971identification}.

\paragraph{Observable chart.}
The collection
\[
\Psi = (m_1, \kappa_2, \kappa_3)
\]
provides a finite observable chart that captures the local singular structure. In particular:
\begin{itemize}
    \item $\mu$ is observable at order $1$,
    \item $\delta$ is observable at order $2$,
    \item $\alpha$ appears through mixed terms at order $3$.
\end{itemize}
This example illustrates how higher-order observables are required to recover directions that are invisible at first order.

\subsection{One-unit neural network}

Consider the model
\[
f_\theta(x) = a \, \tanh(wx + b),
\]
with parameters $\theta = (a,w,b)$. We study the inactive-unit singularity at $(a,w,b) = (0,w_0,b_0)$ \cite{watanabe2009algebraic}.

\paragraph{First-order observables.}
For a test function $\phi$, define
\[
\psi_\phi(\theta) = \mathbb E[f_\theta(X)\phi(X)].
\]
Then
\[
\psi_\phi(\theta) = a \, G_\phi(w,b),
\]
where $G_\phi(w,b) = \mathbb E[\tanh(wX+b)\phi(X)]$. At $a=0$, the derivative with respect to $w$ and $b$ vanishes, so these directions are invisible at first order.

\paragraph{Higher-order structure.}
Expanding $G_\phi(w,b)$ near $(w_0,b_0)$ yields
\[
\psi_\phi(\theta)
=
a G_\phi(w_0,b_0)
+
a(w-w_0)\partial_w G_\phi(w_0,b_0)
+
a(b-b_0)\partial_b G_\phi(w_0,b_0)
+ \cdots
\]
Thus the hidden directions $w$ and $b$ appear through mixed second-order terms.

\paragraph{Observable chart.}
By choosing sufficiently many test functions $\phi_1,\phi_2,\phi_3$, one obtains an observable chart of the form
\[
\Psi(\theta) \approx \big(a,\; a(w-w_0),\; a(b-b_0)\big),
\]
which captures the local singular structure.

\subsection{Reduced rank regression}

Consider the reduced rank regression model
\[
Y = BX + \varepsilon,
\]
where $B \in \mathbb R^{p \times q}$ has rank at most $r$. The model can be parameterized as $B = UV^\top$ with $U \in \mathbb R^{p \times r}$ and $V \in \mathbb R^{q \times r}$. A natural observable is the cross-moment
\[
\Psi(P) = \mathbb E[YX^\top].
\]
Under the model, this observable recovers $B$ exactly:
\[
\mathbb E[YX^\top] = B \, \mathbb E[XX^\top].
\]
Assuming $\mathbb E[XX^\top]$ is nonsingular, $\Psi$ provides a coordinate system for the model in terms of $B$. In particular, the observable chart directly parameterizes the intrinsic model space without redundancy. At regular points (where $\operatorname{rank}(B)=r$), this observable provides a full first-order description of the model. 

Singularities arise at rank-deficient points, where the tangent space degenerates. In these cases, the observable chart still provides a natural representation of the intrinsic geometry, even though the parameterization $(U,V)$ is non-identifiable. 

\subsubsection{Reduced rank regression: a $2 \times 2$ rank-1 example}
We consider a simple reduced rank regression model
\[
Y = BX + \varepsilon,
\]
where $B \in \mathbb R^{2 \times 2}$ has rank at most $1$. A convenient parameterization is
\[
B = uv^\top,
\quad
u \in \mathbb R^2,\; v \in \mathbb R^2.
\]

Assume that $X$ is centered with covariance $\mathbb E[XX^\top] = I_2$, and that $\varepsilon$ is independent noise with zero mean.

\paragraph{Observable coordinates.}
A natural observable is the cross-moment
\[
\Psi(P) = \mathbb E[YX^\top].
\]
Under the model,
\[
\mathbb E[YX^\top] = B,
\]
so the observable coordinates are the entries
\[
m_{ij} = \mathbb E[Y_i X_j], \qquad i,j=1,2.
\]
Thus the observable map identifies the model with a subset of $\mathbb R^4$.

\paragraph{Rank constraint.}
The rank-1 structure imposes the relation
\[
m_{11}m_{22} - m_{12}m_{21} = 0.
\]
Hence the observable image is the determinantal variety
\[
O = \left\{ m \in \mathbb R^4 : m_{11}m_{22} - m_{12}m_{21} = 0 \right\}.
\]

\paragraph{Regular and singular points.}
At points where $B \neq 0$ has rank $1$, this variety is locally a smooth $3$-dimensional manifold, and the observable coordinates provide a regular local parameterization. At the singular point $B=0$, however, the gradient of the defining equation vanishes, and the first-order tangent space coincides with the full ambient space $\mathbb R^4$. Thus the rank constraint is invisible to first-order observable derivatives.

\paragraph{Second-order structure.}
The rank constraint appears only at second order through the quadratic relation
\[
\Delta = m_{11}m_{22} - m_{12}m_{21}.
\]
This shows that the singular structure is encoded in higher-order observable relations.

\paragraph{Observable order along arcs.}
Consider an analytic curve through the singular point defined by
\[
u(t) = t a, \qquad v(t) = t b,
\]
so that
\[
B(t) = u(t)v(t)^\top = t^2 ab^\top.
\]
Then each observable coordinate satisfies
\[
m_{ij}(t) = O(t^2),
\]
so
\[
o_\Psi(\gamma) = 2.
\]
In Gaussian regression models, the Kullback--Leibler divergence satisfies
\[
K(B) \asymp \|B\|^2,
\]
and therefore along this curve
\[
K(\gamma(t)) = O(t^4),
\]
so that
\[
o_K(\gamma) = 4 = 2\,o_\Psi(\gamma).
\]
This example illustrates that observable order provides an intrinsic description of singular structure, independent of parameterization.

\paragraph{Interpretation.}
This example illustrates that the singularity at $B=0$ is invisible to first-order observable geometry and first appears at second order. The observable chart detects this through quadratic relations among observable coordinates, providing a concrete instance of higher-order observable structure. This provides a concrete verification of Theorem~\ref{thm:observable-kl}, showing that observable order correctly predicts the rate of KL decay in a singular model. Thus, although the rank constraint is completely invisible to first-order observable geometry, it is recovered exactly at second order through observable relations.

\subsection{Summary}

Across these examples, the observable construction follows a common pattern:
\begin{itemize}
    \item First-order observables recover classical tangent directions.
    \item Certain directions are invisible at first order.
    \item Additional observables reveal these directions at higher order.
\end{itemize}
This demonstrates that singular structure can be understood in terms of the order at which different directions become observable.

\section{Discussion}

\subsection{Summary of contributions}

We introduced an observable-based framework for studying statistical models, in which model structure is presented through expectation functionals rather than parameterizations. Observable charts provide finite-dimensional presentations of the model image and can be used to analyze both regular and singular behavior. We showed that:
\begin{itemize}
    \item Observable derivatives recover the classical identifiable tangent space, providing a first-order description equivalent to Fisher geometry.
    \item In regular models, finitely many observables provide local coordinate systems, explaining why classical asymptotic theory depends only on first-order quantities.
    \item In singular models, certain directions are invisible to first-order observables and become detectable only through higher-order observable expansions.
    \item Observable order along analytic curves provides a natural way to characterize higher-order identifiability and is closely tied to the behavior of the Kullback--Leibler divergence.
\end{itemize}
Together, these results suggest that classical statistical theory can be viewed as a first-order approximation within a broader observable geometry. In particular, observable geometry provides a coordinate-free description of local model structure that separates intrinsic statistical behavior from parameterization artifacts.

\subsection{Relation to singular learning theory}

Singular learning theory (SLT) characterizes asymptotic behavior in terms of the geometry of the Kullback--Leibler divergence near singularities, typically using resolution of singularities and algebraic techniques \cite{watanabe2009algebraic, watanabe2010asymptotic}. The observable framework developed here provides a complementary perspective. Rather than working in parameter space and resolving singularities through geometric transformations, we describe singular structure directly in terms of observable quantities and their expansion orders. In particular, observable order along analytic arcs parallels the valuation structure used in defining RLCT, suggesting a potential intrinsic reformulation of these invariants.

\subsection{Observable charts as local presentations}

Observable charts should be viewed not merely as representations of a model, but as local presentations of its structure. Observable coordinates are expected to determine the local behavior of the model up to analytic equivalence in sufficiently rich settings. This perspective emphasizes that the parameter space $\Theta$ serves only as a convenient chart for constructing models, while the intrinsic object of interest is the model image $M$. The quadratic relationship between KL divergence and observable displacement is not an assumption but a consequence of first-order completeness together with the second-order expansion of KL divergence. Observable charts provide alternative coordinate systems that factor through $M$ and avoid redundancies arising from non-identifiability.

\subsection{Practical implications}

The observable construction procedure provides a principled way to build finite-dimensional summaries of complex models that capture relevant structure to a desired order. In practice, low-order observable completeness is often sufficient to reveal the dominant singular behavior. This perspective may be useful for:
\begin{itemize}
    \item designing diagnostics for singularity and non-identifiability,
    \item constructing reduced representations of models,
    \item guiding approximation methods that depend on local geometry.
\end{itemize}

\subsection{Limitations}

The present work focuses on local structure near a fixed reference distribution and does not address global properties of the model. In particular:
\begin{itemize}
    \item We do not provide canonical or minimal sets of observables.
    \item While observable completeness is formally defined, a full intrinsic characterization independent of parameterization remains open.
    \item Connections to existing invariants in singular learning theory are not yet formalized.
\end{itemize}
In addition, while observable order captures higher-order structure, computing it explicitly may be challenging in complex models.

\subsection{Future directions}

Several directions for future work arise naturally from this framework. First, it would be valuable to formalize the relationship between observable order and invariants such as the real log canonical threshold (RLCT), potentially providing new tools for analyzing singular asymptotics. Second, the observable framework suggests a hierarchy of geometric structure based on higher-order expansions. Developing a systematic theory of higher-order observable geometry may provide a unified description of singular models. Third, extending these ideas to Bayesian settings, including posterior behavior and model comparison criteria, may yield new insights into existing methods such as WAIC and WBIC. Finally, further work is needed to develop computational methods for constructing observable charts in high-dimensional and nonparametric models.

\subsection{Conclusion}
We have proposed an observable-based perspective on statistical models that unifies classical and singular settings. By focusing on expectation functionals and their expansions, this framework provides a direct and intrinsic way to study identifiability and singular structure, complementing existing approaches based on parameter space geometry.

\section*{AI Acknowledgment}
The author used a large language model to assist with editing and organization. All mathematical content and results were independently developed and verified by the author.

\bibliographystyle{plain}
\bibliography{refs}
\end{document}